\definecolor{darkblue}{rgb}{0.0,0.0,0.3}
\newcommand{\bH}{\mathbb{H}}
\newcommand{\bR}{\mathbb{R}}
\newcommand{\cS}{\mathcal{S}}
\newcommand{\Garding}{G\r{a}rding}
\newtheorem{prop}{Proposition}[section]
\newtheorem{theorem}[prop]{Theorem}
\newtheorem{remark}[prop]{Remark}
\newtheorem{lemma}[prop]{Lemma}
\newtheorem{corollary}[prop]{Corollary}
\DeclareMathOperator{\graph}{graph}
\begin{document}
  \title[Curvature estimates]{Curvature estimates for hypersurfaces of constant curvature in hyperbolic space}
  \author[B.~Wang]{Bin Wang}
  \subjclass[2010]{Primary 53C21; Secondary 35J60, 53C40}
\keywords{hypersurfaces in hyperbolic space, the asymptotic Plateau problem, curvature estimates, elementary symmetric polynomials, \Garding\ cones.}
  \address{Department of Mathematics and Statistics, McMaster University, 1280 Main Street West, Hamilton, ON, L8S 4K1, Canada.}
  
  \curraddr{Department of Mathematics \\ The Chinese University of Hong Kong\\ Shatin, N.T., Hong Kong.}
  
\email{bwang@math.cuhk.edu.hk}
  \begin{abstract}
  In this note, we prove that for every $0<\sigma<1$, there exists a smooth complete hypersurface $\Sigma$ in $\bH^{n+1}$ with prescribed asymptotic boundary $\partial \Sigma=\Gamma$ at infinity, whose principal curvatures $\kappa=(\kappa_1,\ldots,\kappa_n)$ lie in a general cone $K$ and satisfy $f(\kappa)=\sigma$ at each point of $\Sigma$. Previously, the problem has been studied by Guan-Spruck in [J. Eur. Math. Soc. (JEMS) 12 (2010), no. 3, 797-817], and they proved the existence result for $\sigma_0<\sigma <1$, where $\sigma_0>0$. A major ingredient of our proof is a refined curvature estimate of theirs that is applicable when the curvature function $f(\kappa)$ has controllable partial derivatives, but it is adequate for our purpose; specifically, we solve the problem for $f=H_{k}/H_{k-1}$ in the $k$-th \Garding\ cone where $H_k$ is the normalized $k$-th elementary symmetric polynomial and $1 \leq k \leq n$.
  \end{abstract}
  \maketitle

  \section{Introduction}
Fix $n \geq 2$; let $\bH^{n+1}$ denote the hyperbolic space of dimension $n+1$ and let $\partial_{\infty} \bH^{n+1}$ denote the ideal boundary of $\bH^{n+1}$ at infinity. Suppose that $f: K \to \bR$ is a smooth symmetric function defined in an open symmetric convex cone $K\subseteq \bR^n$ with vertex at the origin, containing the positive cone
\[K_{n}^{+}=\{\lambda \in \bR^n : \lambda_i>0\ \forall\ i\} \subseteq K. \]
Given a disjoint collection of closed embedded smooth $(n-1)$-dimensional submanifolds $\Gamma=\{\Gamma_1,\ldots,\Gamma_m\} \subseteq \partial_{\infty} \bH^{n+1}$ and a constant $0<\sigma<1$, we study the problem of finding a smooth complete hypersurface $\Sigma$ in $\bH^{n+1}$ satisfying 
\begin{equation}
\text{$\kappa[\Sigma] \in K$ and $f(\kappa[\Sigma])=\sigma$ at each point of $\Sigma$} \label{req1}
\end{equation}
with the asymptotic boundary
\begin{equation}
\partial \Sigma=\Gamma \label{req2}
\end{equation} where $\kappa[\Sigma]=(\kappa_1,\ldots,\kappa_n)$ denotes the vector of induced hyperbolic principal curvatures of $\Sigma$. Following \cite{JDG}, we may refer to it as the asymptotic Plateau problem in hyperbolic space and it will be investigated under a few standard assumptions on $f(\kappa)$; see section 2.

When $K=K_{n}^{+}$ is the positive cone, the problem (\ref{req1})-(\ref{req2}) has been completely solved by Bo Guan, Joel Spruck, Marek Szapiel and Ling Xiao in a series of papers \cite{JGA,SGAR,JDG}; they not only proved an existence result that is essentially optimal, but also established some uniqueness results under common geometrical assumptions about $\Gamma$, or when $f(\kappa)$ satisfies a certain property in a subset of $K$.

It is then natural to wonder whether the existence result holds in a general cone $K$. In \cite{JEMS}, Guan and Spruck have almost solved the problem in its full generality; they showed that if $\Gamma$ is mean-convex \footnote{$\Gamma$ is mean-convex if its Euclidean mean curvature is non-negative.} and $f(\kappa)$ satisfies (\ref{f cond 1})-(\ref{f cond extra}) in $K$, then for every $\sigma \in (\sigma_0,1)$ there exists a solution to (\ref{req1})-(\ref{req2}). Here $\sigma_0>0$ is strictly positive, whose value can be found by numerical computation to lie between 0.3703 and 0.3704.

The next task is certainly to either generalize their result for all $\sigma \in (0,1)$, or remove the mean-convexity condition on $\Gamma$. However, since the mean-convexity condition is needed in the derivation of gradient estimates \cite[Proposition 4.1]{JEMS}, we will keep it in this paper and try to extend their existence theorem \cite[Theorem 1.2]{JEMS} so that it will hold for all $\sigma \in (0,1)$.

As pointed out by Guan-Spruck themselves in \cite{JEMS}, the only issue occurs in the derivation of a maximum principle for the largest hyperbolic principal curvature i.e. $\kappa_{\max} \leq C$, which holds only for $\sigma \in (\sigma_0,1)$. Therefore, our task reduces to improving the curvature estimate. By examining their proof, we observe that a key to the desired estimation is to control the magnitude of $f_i$ or $\sum_{i=1}^{n} f_i$ in $K$ where $f_i:=\frac{\partial f}{\partial \kappa_i}$ is the $i$-th partial derivative of $f$, and so we prove

\begin{theorem} \label{my theorem}
Suppose $\Gamma$ is mean-convex i.e. its Euclidean mean curvature is non-negative. If the curvature function $f: K \to \bR$ satisfies \textbf{either} of the following conditions:
\begin{gather}
\text{there exists some $C>0$ such that $\sum_{i=1}^{n} f_i(\kappa) \leq C$ for all $\kappa \in K$} \tag{A}\label{my assumption1}\\
\text{there exists some $C>0$ such that if $\kappa_i > 0$ then $f_i \leq C\cdot\frac{f}{\kappa_i}$ in $K$} \tag{B}\label{my assumption2}
\end{gather}
in addition to (\ref{f cond 1})-(\ref{f cond extra}) in the general cone $K$, then for all $\sigma \in (0,1)$ there exists a solution to the asymptotic Plateau problem (\ref{req1})-(\ref{req2}).
\end{theorem}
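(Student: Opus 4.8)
The plan is to run the continuity/degree-theoretic scheme of Guan--Spruck essentially unchanged, so that everything reduces to a priori estimates, and then to supply the single missing ingredient: a curvature bound $\kappa_{\max}\le C$ valid for \emph{every} $\sigma\in(0,1)$. Concretely, work in the upper half-space model $\bH^{n+1}=\{(x,x_{n+1}):x\in\bR^n,\ x_{n+1}>0\}$ with $\partial_\infty\bH^{n+1}=\bR^n$, and seek $\Sigma=\graph(u)$ as a vertical graph $x_{n+1}=u(x)$ over the bounded domain $\Omega\subset\bR^n$ enclosed by $\Gamma$; then (\ref{req1}) becomes a fully nonlinear elliptic equation for $u$ and (\ref{req2}) becomes $u=0$ on $\partial\Omega$. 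As in \cite{JEMS} one first solves an approximating problem with $u\ge\epsilon$, where the $C^0$ bound comes from spherical-cap and horosphere barriers, the $C^1$ bound is exactly \cite[Proposition 4.1]{JEMS} (whose proof uses the mean-convexity of $\Gamma$ and is unaffected), and the second-order estimate near $\partial\Omega$ is taken from \cite{JEMS}. Thus it suffices to establish the interior bound $\kappa_{\max}\le C\bigl(1+\sup_{\partial\Sigma}\kappa_{\max}\bigr)$ with $C$ independent of $\sigma$.

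For that I would revisit the maximum-principle computation of \cite{JEMS}. Take a test function $W=\log\kappa_{\max}+\varphi$, where $\varphi$ is built from the height function $u$ and a support-type function of $\Sigma$ as in Guan--Spruck, and evaluate at an interior maximum point $p$; after rotating the frame so that the second fundamental form is diagonal at $p$ (and perturbing $\kappa_{\max}$ to a smooth function if it is a repeated eigenvalue), differentiate $f(\kappa)=\sigma$ once and twice, substitute the critical-point relations, and use the concavity of $f$ to discard the nonnegative term $-\sum f_{ij,kl}h_{ij1}h_{kl1}$. What remains are precisely the terms that Guan--Spruck could control only for $\sigma>\sigma_0$: they arise from the inhomogeneity of the hyperbolic principal curvatures $\kappa_i$ as functions of the Euclidean graph data and from the ambient curvature of $\bH^{n+1}$, and each carries a factor of $\sum_i f_i$ or of $\sum_i f_i\kappa_i$ (equivalently, $f_i$ paired with $\kappa_i$). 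Under (\ref{my assumption1}) one has $\sum_i f_i\le C$ directly; under (\ref{my assumption2}), at the indices with $\kappa_i>0$ one gets $f_i\kappa_i\le Cf=C\sigma\le C$, while the indices with $\kappa_i\le 0$ contribute with a favorable sign since $f_i>0$. Either way the offending terms are bounded by a constant independent of $\sigma$, the maximum-principle inequality closes, and $\kappa_{\max}(p)\le C$, giving the global bound.

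Granting the curvature estimate, one concludes as in \cite{JEMS}: the equation is then uniformly elliptic and concave on compact subsets, so Evans--Krylov and Schauder theory yield interior $C^{2,\alpha}$ and higher estimates for the approximating solutions, uniform in $\epsilon$; the approximating solutions themselves exist by the same degree argument as in \cite{JEMS}, since only the $C^2$ estimate needed to be extended; and letting $\epsilon\to0$ produces a smooth complete hypersurface $\Sigma$ satisfying (\ref{req1}) with $\partial\Sigma=\Gamma$, as in \cite[Theorem 1.2]{JEMS}.

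The main obstacle is the second paragraph. One must carry out the curvature computation carefully enough to verify that every term which is problematic for small $\sigma$ is, after all substitutions, literally a bounded coefficient times $\sum_i f_i$ or a bounded coefficient times $\sum_i f_i\kappa_i$, with no residual pieces that degenerate as $\sigma\to 0$; and one must check that the auxiliary function $\varphi$ can be chosen so that its own contribution to the inequality is likewise controlled under (\ref{my assumption1}) or (\ref{my assumption2}). This bookkeeping -- isolating exactly where the partial derivatives $f_i$ enter and bounding them via the hypotheses -- is the technical heart of the argument.
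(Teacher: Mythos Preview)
Your overall architecture matches the paper: reduce to the Guan--Spruck framework, reuse their $C^0$, $C^1$, and boundary $C^2$ estimates verbatim, and supply only an improved global curvature bound. The paper does exactly this, with test function $\kappa_{\max}/(\nu^{n+1}-a)$ (no height term), and under assumption (\ref{my assumption1}) your heuristic is essentially right: the leftover bad piece is bounded by $C\kappa_1\sum_i f_i$, so $\sum_i f_i\le C$ closes the inequality immediately.

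The gap is in your handling of assumption (\ref{my assumption2}). After the maximum-principle computation the dangerous term is not $\sum_i f_i\kappa_i$ but (up to harmless positive factors) $\sum_{\kappa_i<\nu^{n+1}} f_i(\kappa_i-\nu^{n+1})$, which contains a piece $-\nu^{n+1}\sum f_i$ that is \emph{not} controlled by $f_i\kappa_i\le C\sigma$; and your sign claim ``indices with $\kappa_i\le 0$ contribute favorably'' is false here, since for such $i$ the summand $f_i(\kappa_i-\nu^{n+1})$ is strictly negative. The paper's fix is a genuine additional idea: first absorb the indices with $\kappa_i\le -\eta$ into the positive term $\tfrac{a\kappa_1}{\nu^{n+1}-a}\sum\kappa_i^2 f_i$; then split the remaining indices $\{-\eta<\kappa_i<\nu^{n+1}\}$ into $J=\{\theta f_i<f_1\}$ and $L=\{\theta f_i\ge f_1\}$. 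The $L$-sum is neutralized by the third-order concavity term $\sum_{i\ge 2}\tfrac{f_i-f_1}{\kappa_1-\kappa_i}h_{i11}^2$, leaving a residual only over $\{i\in L:\kappa_i>a\}$, where $\kappa_i>a>0$ so (\ref{my assumption2}) gives $f_i\le C\sigma/a$. The $J$-sum is bounded by $nf_1/\theta$, and (\ref{my assumption2}) applied to $i=1$ gives $f_1\le C\sigma/\kappa_1$. Without this $J$/$L$ splitting, (\ref{my assumption2}) alone cannot close the estimate, because it says nothing when $\kappa_i$ is near $0$ or negative; this is precisely the ``technical heart'' you flag but have not correctly anticipated.
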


To justify the imposition of (\ref{my assumption1}) and (\ref{my assumption2}), we note that they are satisfied by an important class of curvature functions: Let $H_k$ be the $k$-th normalized elementary symmetric polynomial and let $K_k$ denote the $k$-th \Garding\ cone, which are defined as 
\begin{align*}
H_k(\kappa_1,\ldots,\kappa_n)&:=\frac{1}{\binom{n}{k}} \sum_{1 \leq j_1 < j_2 < \cdots < j_k \leq n} \kappa_{j_1}\kappa_{j_2}\cdots \kappa_{j_k}, \quad 1 \leq k \leq n \\
K_k&:=\{\lambda \in \bR^{n}: H_j(\lambda)>0 \quad \forall\ 1 \leq j \leq k\}.\\
\end{align*} Then we can apply theorem \ref{my theorem} to obtain

\begin{corollary} \label{my corollary}
Suppose $\Gamma$ is mean-convex. For the following $(f,K)$ pairs:
\begin{enumerate}
\item[(i)] $f=\frac{H_{k}}{H_{k-1}}$ and $K=K_k$, $1\leq k \leq n$.
\item[(ii)] $f=(H_k/H_l)^{\frac{1}{k-l}}$ and $K=K_{k+1}$, $1 \leq l<k\leq n$.
\item[(iii)] $f=H_{k}^{1/k}$ and $K=K_{k+1}$, $1 \leq k \leq n$.
\end{enumerate}, the asymptotic Plateau problem (\ref{req1})-(\ref{req2}) is solvable for all $\sigma \in (0,1)$.
\end{corollary}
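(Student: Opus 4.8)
The plan is to derive Corollary \ref{my corollary} from Theorem \ref{my theorem} by checking, for each of the three pairs $(f,K)$, the structural hypotheses (\ref{f cond 1})--(\ref{f cond extra}) of section 2 together with \textbf{one} of the two additional conditions (\ref{my assumption1}), (\ref{my assumption2}). That $H_k/H_{k-1}$ is positive, strictly increasing in each variable, concave and homogeneous of degree one on $K_k$, and that $(H_k/H_l)^{1/(k-l)}$ and $H_k^{1/k}$ enjoy the same properties on $K_{k+1}$ (in fact already on $K_k$), together with the remaining normalization and growth conditions including (\ref{f cond extra}), is classical and we shall only recall it; see \cite{JEMS} and the references therein. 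Thus the actual work is the verification of (\ref{my assumption1}) or (\ref{my assumption2}), a short computation with elementary symmetric polynomials. Throughout, let $\sigma_j$ denote the (unnormalized) $j$-th elementary symmetric polynomial, $\sigma_j(\kappa\,|\,i)$ the same polynomial in the variables $\kappa$ with $\kappa_i$ omitted, and recall the identities $\partial\sigma_j/\partial\kappa_i=\sigma_{j-1}(\kappa\,|\,i)$, $\sigma_j=\kappa_i\,\sigma_{j-1}(\kappa\,|\,i)+\sigma_j(\kappa\,|\,i)$ and $\sum_{i}\sigma_j(\kappa\,|\,i)=(n-j)\,\sigma_j$, as well as the standard fact that $\kappa\in K_m$ implies $(\kappa\,|\,i)\in K_{m-1}$ for every $i$.

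For pair (i) I would verify (\ref{my assumption1}). Since $f=H_k/H_{k-1}=c\,\sigma_k/\sigma_{k-1}$ with $c=\binom{n}{k-1}/\binom{n}{k}=k/(n-k+1)$, the quotient rule gives $f_i=c\,\bigl(\sigma_{k-1}(\kappa\,|\,i)\,\sigma_{k-1}-\sigma_k\,\sigma_{k-2}(\kappa\,|\,i)\bigr)/\sigma_{k-1}^2$, and summing over $i$ with $\sum_i\sigma_j(\kappa\,|\,i)=(n-j)\sigma_j$ collapses this to
\[
\sum_{i=1}^{n}f_i=c\Bigl[(n-k+1)-(n-k+2)\,\frac{\sigma_k\,\sigma_{k-2}}{\sigma_{k-1}^2}\Bigr].
\]
On $K_k$ one has $\sigma_{k-2},\sigma_{k-1},\sigma_k>0$, so the subtracted term is non-negative and hence $\sum_i f_i\le c(n-k+1)=k$; for $k=1$ this is immediate since $f=H_1$. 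Thus (\ref{my assumption1}) holds with $C=k$ and Theorem \ref{my theorem} applies.

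For pairs (ii) and (iii) --- the latter being $l=0$ of the former --- I would instead verify (\ref{my assumption2}), and here it is essential that the cone be $K_{k+1}$: if $\kappa\in K_{k+1}$ then $(\kappa\,|\,i)\in K_k$ for each $i$, so $\sigma_j(\kappa\,|\,i)>0$ for all $j\le k$. Fix $i$ with $\kappa_i>0$. From $\sigma_k=\kappa_i\sigma_{k-1}(\kappa\,|\,i)+\sigma_k(\kappa\,|\,i)$ and this positivity, $0<\kappa_i\,\partial_i\sigma_k=\sigma_k-\sigma_k(\kappa\,|\,i)<\sigma_k$, i.e.\ $\kappa_i\,\partial_i\log H_k\le 1$; likewise $\kappa_i\,\partial_i\log H_l=\kappa_i\,\sigma_{l-1}(\kappa\,|\,i)/\sigma_l\ge 0$. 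Writing $\log f=\tfrac{1}{k-l}(\log H_k-\log H_l)$ and combining these,
\[
\kappa_i\,\frac{f_i}{f}=\frac{1}{k-l}\bigl(\kappa_i\,\partial_i\log H_k-\kappa_i\,\partial_i\log H_l\bigr)\le\frac{1}{k-l},
\]
so $f_i\le\tfrac{1}{k-l}\,f/\kappa_i$ and (\ref{my assumption2}) holds with $C=1/(k-l)$ (with $C=1/k$ in case (iii)). Again Theorem \ref{my theorem} applies.

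I do not expect a serious obstacle here: the only delicate point, and the reason (ii)--(iii) are stated on $K_{k+1}$ rather than $K_k$, is precisely the sign of $\sigma_k(\kappa\,|\,i)$ --- without the extra room it can be negative and the bound $\kappa_i\,\partial_i\log H_k\le 1$ fails. For pair (i) no such shrinking is needed because $H_k/H_{k-1}$ is already concave throughout $K_k$ and the trace identity makes the bound on $\sum_i f_i$ completely explicit; this uniform bound is the one genuinely new observation. Finally, by the Remark following Theorem \ref{my theorem}, all of the above need only hold at the single interior maximum point of the test function of section 3, so even the pointwise versions of (\ref{my assumption1})--(\ref{my assumption2}) would suffice.
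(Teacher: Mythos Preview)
Your proposal is correct and follows essentially the same route as the paper: invoke the standard structural conditions (\ref{f cond 1})--(\ref{f cond extra}) from the literature, then verify (\ref{my assumption1}) for case (i) via the trace identity $\sum_i\sigma_{j}(\kappa\,|\,i)=(n-j)\sigma_j$, and (\ref{my assumption2}) for cases (ii)--(iii) via $\sigma_k=\kappa_i\sigma_{k-1}(\kappa\,|\,i)+\sigma_k(\kappa\,|\,i)$ together with the positivity of $\sigma_k(\kappa\,|\,i)$ on $K_{k+1}$. Your use of the logarithmic derivative in (ii)--(iii) is a tidy repackaging of the paper's direct computation, and your bound $\sum_i f_i\le k$ in (i) is in fact the correct constant (the paper's stated bound $n-k+1$ carries a small normalization slip, though this is immaterial to the conclusion).
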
 

Note that 
\[K_{n}^{+}=K_n \subseteq \cdots \subseteq K_{k+1}\subseteq K_k \subseteq K_{k-1} \subseteq \cdots \subseteq K_1.\]

The ultimate goal is to solve (\ref{req1})-(\ref{req2}) in the $k$-th Garding cone $K_k$ for either $f=H_{k}^{1/k}$ or $f=(H_k/H_l)^{\frac{1}{k-l}}$. Hence, our results in the above corollary are approaching this goal.

\begin{remark}
In fact, the case (i) was the original motivation for imposing the  assumption (\ref{my assumption1}). We later discovered that case (ii) and case (iii) have a similar property, which is (\ref{my assumption2}).
\end{remark}

\begin{remark}
According to \cite[Example 1.3]{Spruck-MSRI}, the curvature quotient 
\[\frac{H_n}{H_{n-1}}=\frac{n}{\sum_{i=1}^{n} \frac{1}{\kappa_i}}\] is called the \textbf{harmonic curvature}.

Therefore, the curvature quotient $\frac{H_k}{H_{k-1}} (1 \leq k \leq n)$ we considered here may be seen as a generalized version of the harmonic curvature. 

This particular curvature function has been studied in the context of curvature flows by Gerhardt \cite{Gerhardt-1990} and Urbas \cite{Urbas}, which inspired several proofs of geometric inequalities; to name a few, see \cite{Guan-Li, Li-Wei-Xiong, Ge-Wang-Wu} and references therein. For more recent works concerning this particular curvature function, see, for example, \cite{Lu-CAG, Andrews-Hu-Li}.
\end{remark}

\begin{remark}
By noting that $\frac{\partial H_1}{\partial \kappa_i}=\frac{1}{n}$ and $\frac{\partial H_n}{\partial \kappa_i}=\frac{H_n}{\kappa_i}$, it follows that theorem \ref{my theorem} also provides alternative proofs to the existence theorems for $(f,K)=(H_1,K_1)$ and $(f,K)=(H_n,K_n)$.
\end{remark}

This note is organized as follows. In section 2, we first briefly describe the method of solution for the asymptotic problem (\ref{req1})-(\ref{req2}) as exhibited in \cite{JGA,JEMS,SGAR,JDG}, then we list formulas and facts that will be used in later sections with references to their proofs. In section 3, we adapt the method from section 4 in \cite{SGAR} to improve the curvature estimate in \cite{JEMS}. With the aid of our assumptions (\ref{my assumption1}) and (\ref{my assumption2}), the curvature estimate will hold for all $\sigma \in (0,1)$ which then yields theorem \ref{my theorem} and hence partially extends the main theorem in \cite{JEMS}. We emphasize that it is here and only here in \cite{JEMS} that we modify, everything else has already been perfectly established and will remain intact. Finally in section 4, we prove corollary \ref{my corollary}. For more information about the asymptotic Plateau problem in hyperbolic space, see \cite{Rosenberg-Spruck,Nelli-Spruck,AJM,JGA,JEMS,SGAR,JDG} and the references therein.

After this work was completed, we learned that Lu \cite{Lu} solved (\ref{req1})-(\ref{req2}) for $f=H_{n-1}^{1/(n-1)}$ and $K=K_{n-1}$ by a different derivation for the curvature estimate, which employs a concavity inequality of Ren-Wang \cite{Ren-Wang-1}. Another new paper concerning the asymptotic Plateau problem is \cite{Sui-Sun-CPAA}.

\begin{remark}
In fact, Ren-Wang's inequality holds also for $f=H_{n-2}^{1/(n-2)}$ and $K=K_{n-2}$ in dimension $n \geq 5$; see \cite{Ren-Wang-2}. Therefore, Lu's work \cite{Lu} entails that \eqref{req1}-\eqref{req2} is solvable for $f=H_{n-2}^{1/(n-2)}$ and $K=K_{n-2}$ in dimension $n \geq 5$. However, Ren-Wang's inequality for $H_{n-2}^{1/(n-2)}$ does not hold in dimension 4; see the counterexample at the end of section 3 in \cite{Ren-Wang-1}. Note that this case corresponds to the constant scalar curvature equation $H_{2}^{1/2}=\sigma$ in dimension 4, which still remains open for $\sigma \in (0,\sigma_0]$.
\end{remark}

\section{Preliminaries}
We will use the upper half-space model for the hyperbolic space 
\[\bH^{n+1}=\{(x,x_{n+1}) \in \bR^{n+1}: x_{n+1}>0\}\] equipped with the hyperbolic metric
\[ds^2=\frac{\sum_{i=1}^{n+1}dx_{i}^2}{x_{n+1}^2}\] so that we can identify $\partial_{\infty}\bH^{n+1}$ with $\bR^{n}=\bR^{n} \times \{0\} \subseteq \bR^{n+1}$ and (\ref{req2}) can be understood in the Euclidean sense. We say $\Sigma$ has compact asymptotic boundary if $\partial \Sigma \subseteq \partial_{\infty} \bH^{n+1}$ is compact with respect to the Euclidean metric in $\bR^n$.

Throughout this note all hypersurfaces in $\bH^{n+1}$ are assumed to be connected and orientable. If $\Sigma$ is a complete hypersurface in $\bH^{n+1}$ with compact asymptotic boundary at infinity, then the normal vector field of $\Sigma$ is chosen to be the one pointing toward the unique unbounded region in $\bR_{+}^{n+1} \setminus \Sigma$, and both the hyperbolic and Euclidean principal curvatures of $\Sigma$ are calculated with respect to this normal vector field.

Suppose $\Sigma$ is locally represented as a graph of a positive $C^2$ function $u(x)>0$ over a domain $\Omega \subseteq \bR^n$:
\[\Sigma=\{(x,u(x)) \in \bR^{n+1}: x \in \Omega\}\] oriented by the upward Euclidean unit normal vector field $\nu$ to $\Sigma$:
\[\nu=\left(-\frac{Du}{w},\frac{1}{w}\right),\ \text{where $w=\sqrt{1+|Du|^2}$.}\]

Let $\cS$ denote the space of $n \times n$ symmetric matrices and \[\cS_K:=\{A \in \cS: \lambda(A) \in K\}\] where $\lambda(A)=(\lambda_1,\ldots,\lambda_n)$ denotes the eigenvalues of $A=[a_{ij}]$. We define a function \[F: \cS_K \to \bR\] by $F(A):=f(\lambda(A))$ and denote 
\[F^{ij}:=\frac{\partial F}{\partial a_{ij}}, \quad F^{ij,kl}:=\frac{\partial^2F}{\partial a_{ij}\partial a_{kl}}\] The curvature relation (\ref{req1}) can thus be written locally as 
\[F(h_{ij})=\sigma\] where $h_{ij}$ is the hyperbolic second fundamental form of $\Sigma$. 

According to the method exhibited in \cite{JGA,JEMS,SGAR,JDG}), a solution to (\ref{req1})-(\ref{req2}) can then be constructed as the graph $\Sigma=\graph[u]$ of a smooth solution $u(x)$ to the following Dirchlet problem 
\begin{align*}
G(D^2u,Du,u)=\sigma, \quad u>0 \quad  &\text{in $\Omega$}\\
u=0 \quad &\text{on $\partial \Omega$}
\end{align*} with $\kappa(\graph[u]) \in K$. The existence of solutions can be ensured by the method of continuity which requires $C^{2,\alpha}$ estimates for admissible solutions. As we shall see below, we will impose conditions on the curvature function $f(\kappa)$ so that the operator $G$ is elliptic and concave in $D^2u$; see (\ref{f cond 1}) and (\ref{concavity}). Also, the condition (\ref{f cond 3}) will imply that $G$ is uniformly elliptic on compact subdomains of $\Omega$ and thus by the Evans-Krylov theorem \cite{Evans,Krylov} it suffices to derive \textit{a priori} $C^2$ estimates for admissible solutions and this is where the curvature estimate $\kappa_{\max} \leq C$ enters into place. We will provide a detailed proof for the curvature estimate in section 3.

We now list a few geometric identities that will be used in deriving the curvature estimate of section 3. For a hypersurface $\Sigma$ in $\bH^{n+1}$, let $g$ and $\nabla$ denote the induced hyperbolic metric and Levi-Civita connection on $\Sigma$, respectively. Viewing $\Sigma$ as a submanifold of $\bR^{n+1}$, let $\tilde{g}$ denote the induced metric on $\Sigma$ from $\bR^{n+1}$ and $\tilde{\nabla}$ is its corresponding Levi-Civita connection. We have

\begin{lemma} \label{formulas}
In a local orthonormal frame $\{\tau_1,\ldots,\tau_n\}$, we have 
\begin{enumerate}
\item[(i)] $\sum_{i=1}^{n} \frac{u_{i}^2}{u^2}=|\tilde{\nabla}u|^2=1-(\nu^{n+1})^2 \leq 1$.
\item[(ii)] $\nabla_i\nu^{n+1}=-\frac{u_i}{u}(\kappa_i-\nu^{n+1})$.
\item[(iii)] $F^{ii}h_{11ii}=-F^{ij,kl}h_{ij1}h_{rs1}+\sigma(1+\kappa_{1}^2)-\kappa_1\left(\sum f_i+\sum \kappa_{i}^2f_i\right)$.
\item[(iv)] $F^{ij}\nabla_{ij}\nu^{n+1}=\frac{2u_i}{u}F^{ij}\nabla_{j}\nu^{n+1}+\sigma[1+(\nu^{n+1})^2]-\nu^{n+1}\left(\sum f_i + \sum \kappa_{i}^2f_i\right)$.
\end{enumerate}
\end{lemma}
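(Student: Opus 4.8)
The plan is to establish the four identities by direct local computation in a suitable orthonormal frame, grouping them as follows: (i)--(ii) are ``change of ambient metric'' formulas relating the intrinsic hyperbolic geometry of $\Sigma$ to its extrinsic Euclidean geometry as a graph, while (iii)--(iv) are obtained by differentiating the curvature equation $F(h_{ij})=\sigma$ (resp.\ the identity in (ii)) and commuting covariant derivatives on the space form $\bH^{n+1}$.

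For (i), I would use that on $\Sigma=\graph[u]$ the induced hyperbolic metric $g$ and the induced Euclidean metric $\tilde g$ are conformal, $g=u^{-2}\tilde g$, together with $\nu^{n+1}=1/w$ where $w=\sqrt{1+|Du|^2}$. Writing the height function $u$ as a function on $\Sigma$, its Euclidean tangential gradient is $e_{n+1}-\nu^{n+1}\nu$, so $|\tilde\nabla u|^2=1-(\nu^{n+1})^2\le 1$; and since a $g$-orthonormal frame is obtained by scaling a $\tilde g$-orthonormal frame by $u$, one gets $\sum_i u_i^2/u^2=|\tilde\nabla u|^2$. For (ii), I would start from the Euclidean Weingarten relation applied to $\nu=(-Du/w,1/w)$, take its $(n+1)$-st component, and convert the Euclidean second fundamental form to the hyperbolic one $h_{ij}$ by the standard conformal transformation rule for second fundamental forms under $g=u^{-2}\tilde g$; at a point where $h_{ij}$ is diagonalized with eigenvalues $\kappa_i$ this organizes into $\nabla_i\nu^{n+1}=-\tfrac{u_i}{u}(\kappa_i-\nu^{n+1})$. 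Both computations are carried out in detail in \cites{JEMS,SGAR}, and I would reproduce them.

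For (iii), I would fix $x_0\in\Sigma$ and choose the frame so that $h_{ij}(x_0)=\kappa_i\delta_{ij}$, hence $F^{ij}(x_0)=f_i\delta_{ij}$. Differentiating $F(h_{ij})=\sigma$ once gives $F^{ij}\nabla_k h_{ij}=0$; differentiating twice in the $\tau_1$ direction and evaluating at $x_0$ gives $F^{ii}h_{ii11}=-F^{ij,kl}h_{ij1}h_{kl1}$. The remaining point is to rewrite $h_{ii11}=\nabla_1\nabla_1 h_{ii}$ as $\nabla_i\nabla_i h_{11}=h_{11ii}$ up to curvature corrections: by the Codazzi equations the full covariant derivative $\nabla h$ is totally symmetric on a space form, so $\nabla_1\nabla_1 h_{ii}=\nabla_1\nabla_i h_{1i}$, and commuting the first two covariant derivatives introduces the Riemann tensor of $\bH^{n+1}$, which I would then replace using the Gauss equation $R_{ijkl}=-(g_{ik}g_{jl}-g_{il}g_{jk})+(h_{ik}h_{jl}-h_{il}h_{jk})$. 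Contracting with $f_i$ and using the degree-one homogeneity $\sum_i f_i\kappa_i=f=\sigma$ (one of the standing hypotheses on $f$), the curvature corrections collect into precisely $\sigma(1+\kappa_1^2)-\kappa_1\big(\sum_i f_i+\sum_i \kappa_i^2 f_i\big)$, giving (iii).

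For (iv), I would run the same machine on the function $\nu^{n+1}$: differentiate the identity (ii) once more, contract with $F^{ij}$, and use the Codazzi equations, the curvature equation, and the standard second-order formula for the hyperbolic Hessian of the height function on $\Sigma$ (its first-order content is already recorded in (ii), but at second order it contributes a genuine gradient term, which is the source of $\tfrac{2u_i}{u}F^{ij}\nabla_j\nu^{n+1}$). The bookkeeping mirrors (iii), with $\nu^{n+1}$ in the role of $\kappa_1$, and one again assembles the curvature and equation terms into $\sigma[1+(\nu^{n+1})^2]-\nu^{n+1}\big(\sum_i f_i+\sum_i \kappa_i^2 f_i\big)$. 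The main obstacle throughout will be purely organizational --- correctly tracking signs and every curvature/Gauss term produced when commuting covariant derivatives in (iii) and (iv); there is no conceptual difficulty, and since versions of (iii)--(iv) already appear in \cites{SGAR,JDG} I would in practice cite those derivations and only verify that nothing beyond (\ref{f cond 1})--(\ref{f cond extra}) is used.
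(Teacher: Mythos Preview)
Your proposal is correct and aligns with the paper's own proof, which consists entirely of citations: (i)--(ii) are referred to lemma~4.1 of \cite{Sui}, and (iii)--(iv) to lemmas~4.2--4.3 of \cite{SGAR}. Your outline of the underlying computations (conformal change $g=u^{-2}\tilde g$, Codazzi symmetry, Gauss equation, and homogeneity $\sum f_i\kappa_i=\sigma$) is accurate and matches what those references do, so in practice you and the paper are doing the same thing.
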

\begin{proof}
For (i) and (ii), see lemma 4.1 in \cite{Sui}. For (iii) and (iv), see lemma 4.2 and lemma 4.3 in \cite{SGAR}.
\end{proof}

Now let us specify the assumptions on the curvature function $f(\kappa) \in C^2 (K) \cap  C( \overline{K})$. First, $f(\kappa)$ is assumed to satisfy the following standard assumptions:
\begin{gather}
f_i(\lambda):=\frac{\partial f}{\partial \kappa_i}(\lambda)>0 \quad \text{for $\lambda \in K$ and $1 \leq i \leq n$} \label{f cond 1} \\
\text{$f$ is a concave function in $K$} \label{concavity}\\
\text{$f>0$ in $K$ and $f=0$ on $\partial K$.} \label{f cond 3}
\end{gather} We shall also impose for convenience that
\begin{gather}
\text{$f$ is normalized: $f(1,\ldots,1)=1$,} \label{normalized}\\
\text{and $f$ is homogeneous of degree one in $K$} \label{f cond last} 
\end{gather}; and assume a technical condition 
\begin{gather}
\lim\limits_{R \to \infty} f(\lambda_1,\ldots,\lambda_{n-1},\lambda_n+R) \geq 1+\varepsilon_0 \quad \text{uniformly in $B_{\delta_0}(\mathbf{1})$} \label{f cond extra}
\end{gather} for some fixed $\varepsilon_0>0$ and $\delta_0>0$, which is needed in deriving a pure normal second derivative estimate (see section 5 in \cite{JEMS}). This condition can be removed if $K=K_{n}^{+}$, as shown in \cite{JDG}.

All these conditions are satisfied by the class of curvature functions that we care the most about; namely, the higher order mean curvatures $H_{k}^{1/k}$ where $1 \leq k \leq n$, and the class of curvature quotients $(H_k/H_l)^{\frac{1}{k-l}}$ where $1 \leq l<k \leq n$.
\begin{lemma} \label{curvature function condition}
Conditions (\ref{f cond 1})-(\ref{f cond extra}) are satisfied by both of 
\begin{enumerate}
\item[(i)] $H_{k}^{1/k}$ for $1 \leq k \leq n$, and
\item[(ii)] $(H_k/H_l)^{\frac{1}{k-l}}$ for $1 \leq l<k \leq n$
\end{enumerate} in the $k$-th Garding cone $K_k$.
\end{lemma}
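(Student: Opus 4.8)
The plan is to verify conditions (\ref{f cond 1})--(\ref{f cond extra}) one at a time, since all but one of them are classical facts about elementary symmetric polynomials in the G\aa rding cone and require no new work. The normalization (\ref{normalized}) and the degree-one homogeneity (\ref{f cond last}) are immediate: $H_k$ is homogeneous of degree $k$ with $H_k(\mathbf{1})=1$, so both $H_k^{1/k}$ and $(H_k/H_l)^{\frac{1}{k-l}}$ are homogeneous of degree one and equal $1$ at $\mathbf{1}$. For (\ref{f cond 1}) I would invoke the standard fact that $\partial H_k/\partial\kappa_i>0$ throughout $K_k$; for the quotient the positivity of the first derivatives then follows from the quotient rule together with $H_k,H_l>0$ and $\partial H_l/\partial\kappa_i>0$ in $K_k$. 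For the concavity (\ref{concavity}), the concavity of $H_k^{1/k}$ on $K_k$ is G\aa rding's theorem on hyperbolic polynomials, and the concavity of $(H_k/H_l)^{\frac{1}{k-l}}$ on $K_k$ is likewise well known; I would cite \cite{JEMS,JDG} and the references therein rather than reproduce it.

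For (\ref{f cond 3}), positivity of $f$ on $K_k$ is immediate from the definition of the G\aa rding cone, which forces $H_1,\dots,H_k>0$ there. For the vanishing on $\partial K_k$ I would use the Newton--Maclaurin inequalities, valid on $\overline{K_k}$, namely $H_k^{1/k}\le H_j^{1/j}$ for $1\le j\le k$; the case $j=l$ rearranges to $(H_k/H_l)^{\frac{1}{k-l}}\le H_k^{1/k}$, so for both choices of $f$ one has $0<f\le H_k^{1/k}\le H_j^{1/j}$ on $K_k$, with continuous extension to $\overline{K_k}$. Since every point of $\partial K_k$ has $H_j=0$ for some $1\le j\le k$, this forces $f=0$ on $\partial K_k$; this also records that $f\in C^2(K_k)\cap C(\overline{K_k})$.

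The only condition requiring a genuine (though short) computation is the technical one (\ref{f cond extra}). Writing $\kappa'=(\kappa_1,\dots,\kappa_{n-1})$ and using the one-variable recursion for elementary symmetric polynomials, I would record
\[\binom{n}{k} H_k(\kappa',\kappa_n+R)=\binom{n}{k} H_k(\kappa',\kappa_n)+R\binom{n-1}{k-1}H_{k-1}(\kappa'),\]
where $H_{k-1}(\kappa')$ denotes the normalized symmetric polynomial in the $n-1$ variables $\kappa'$. For $f=H_k^{1/k}$ the coefficient $\binom{n-1}{k-1}H_{k-1}(\kappa')$ is bounded below by a positive constant on a small ball $B_{\delta_0}(\mathbf{1})$, so $H_k(\kappa',\kappa_n+R)\to\infty$ uniformly there, the limit in (\ref{f cond extra}) is $+\infty$, and the condition holds with any $\varepsilon_0>0$. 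For $f=(H_k/H_l)^{\frac{1}{k-l}}$ the same expansion yields
\[\lim_{R\to\infty}\frac{H_k(\kappa',\kappa_n+R)}{H_l(\kappa',\kappa_n+R)}=\frac{\binom{n}{l}\binom{n-1}{k-1}}{\binom{n}{k}\binom{n-1}{l-1}}\cdot\frac{H_{k-1}(\kappa')}{H_{l-1}(\kappa')}=\frac{k}{l}\cdot\frac{H_{k-1}(\kappa')}{H_{l-1}(\kappa')},\]
the convergence being uniform on compact subsets of $\{H_{l-1}(\kappa')>0\}$ because numerator and denominator are polynomials in $R$ with leading coefficients bounded away from $0$. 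At $\kappa=\mathbf{1}$ this limit equals $k/l>1$, so by continuity I can fix $\varepsilon_0>0$ with $(1+2\varepsilon_0)^{k-l}<k/l$ and then choose $\delta_0>0$ and $R_0$ so that $H_k/H_l\ge(1+2\varepsilon_0)^{k-l}$ on $B_{\delta_0}(\mathbf{1})$ for $R\ge R_0$, which gives (\ref{f cond extra}).

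I do not expect a real obstacle: the lemma is essentially an organized verification of classical properties, and the one substantive ingredient --- the concavity in (\ref{concavity}), together with the Newton--Maclaurin inequalities used for (\ref{f cond 3}) --- consists of named results that I would simply quote.
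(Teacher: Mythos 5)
Your proposal is more detailed than the paper's proof, which is almost entirely citation: Lieberman's Theorems~15.17 and 15.18 for (\ref{f cond 1}) and (\ref{concavity}), ``follows from the definitions'' for (\ref{f cond 3})--(\ref{f cond last}), and the discussion before Theorem~1.2 of \cite{JEMS} for (\ref{f cond extra}). Your Newton--Maclaurin argument for (\ref{f cond 3}) and your explicit computation of the limit in (\ref{f cond extra}) (the recursion $\binom{n}{k}H_k(\kappa',\kappa_n+R)=\binom{n}{k}H_k(\kappa',\kappa_n)+R\binom{n-1}{k-1}H_{k-1}(\kappa')$ giving limit $+\infty$ for $H_k^{1/k}$ and limit $(k/l)^{1/(k-l)}>1$ for the quotient) are correct and spell out what the paper leaves to references. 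These are acceptable, even welcome, expansions.

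There is, however, one genuine gap: your argument for (\ref{f cond 1}) in the quotient case does not work as stated. You claim the positivity of $\partial_i\bigl[(H_k/H_l)^{1/(k-l)}\bigr]$ ``follows from the quotient rule together with $H_k,H_l>0$ and $\partial_i H_l>0$.'' But the quotient rule gives a numerator $H_l\,\partial_i H_k - H_k\,\partial_i H_l$, which under your stated hypotheses is a difference of two positive quantities with no determined sign. Positivity of $\partial_i(H_k/H_l)$ is equivalent to $\partial_i\log H_k>\partial_i\log H_l$, and this is a nontrivial inequality (a form of Newton's inequality, or the log-concavity estimates underlying Lieberman's Theorem~15.17) that does not follow from the elementary facts you list. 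The paper avoids this by simply citing Lieberman; you should do the same here, or supply the actual Newton-type inequality, rather than asserting that the quotient rule alone suffices.
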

\begin{proof}
For (\ref{f cond 1}) and (\ref{concavity}), see theorem 15.17 and theorem 15.18 in \cite{Lieberman}. Conditions (\ref{f cond 3})-(\ref{f cond last}) follow from the definitions of $H_k$ and $K_k$. For (\ref{f cond extra}), see the discussion preceding theorem 1.2 in \cite{JEMS}.
\end{proof}

The following will also be useful in section 3.
\begin{lemma}\label{AG}
Let $F$ and $f$ be defined as above. Then for all $A=[a_{ij}] \in \cS$, we have 
\begin{enumerate}
\item[(i)] $F^{ij,kl}a_{ij}a_{kl}=f_{ij}a_{ii}a_{jj}+\sum_{i \neq j} \frac{f_i-f_j}{\kappa_i-\kappa_j}(a_{ij})^2$ and 
\item[(ii)] $\frac{f_i-f_j}{\kappa_i-\kappa_j} \leq 0$ for $i \neq j$.
\end{enumerate}
\end{lemma}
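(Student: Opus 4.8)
\emph{Sketch of proof (plan).} Both identities are instances of the classical calculus of symmetric functions of the eigenvalues of a symmetric matrix, so the plan is to assemble the standard ingredients rather than to reprove them. Since in every application below $F^{ij,kl}$ is evaluated in a frame that diagonalizes the second fundamental form, it suffices --- and is all that is needed --- to prove (i) at a diagonal matrix $A_{0}:=\operatorname{diag}(\kappa_{1},\dots,\kappa_{n})$, with the $\kappa_{i}$ its diagonal entries; the case of repeated $\kappa_{i}$ then follows from the distinct case by continuity of $F^{ij,kl}$, the off-diagonal coefficient $\frac{f_{i}-f_{j}}{\kappa_{i}-\kappa_{j}}$ being interpreted as the limit $f_{ii}-f_{ij}$ when $\kappa_{i}=\kappa_{j}$. (One also uses here the standard fact that, $f$ being symmetric and $C^{2}$, the function $F=f\circ\lambda$ is genuinely $C^{2}$ on $\cS_{K}$.)

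For (i) I would fix a symmetric $A=[a_{ij}]$ and consider $\varphi(t):=F(A_{0}+tA)=f(\lambda(t))$, where $\lambda(t)$ is a smooth choice of eigenvalues of $A_{0}+tA$ near $t=0$; by definition $\varphi''(0)=F^{ij,kl}a_{ij}a_{kl}$. Assuming first the $\kappa_{i}$ pairwise distinct, Rayleigh--Schr\"odinger perturbation theory gives $\lambda_{i}'(0)=a_{ii}$ and $\lambda_{i}''(0)=2\sum_{j\neq i}\frac{a_{ij}^{2}}{\kappa_{i}-\kappa_{j}}$, so the chain rule yields
\begin{align*}
\varphi''(0)&=\sum_{i}f_{i}\lambda_{i}''(0)+\sum_{i,j}f_{ij}\lambda_{i}'(0)\lambda_{j}'(0)\\
&=2\sum_{i\neq j}\frac{f_{i}\,a_{ij}^{2}}{\kappa_{i}-\kappa_{j}}+\sum_{i,j}f_{ij}a_{ii}a_{jj}.
\end{align*}
Relabeling $i\leftrightarrow j$ in the first sum (using $a_{ij}=a_{ji}$) rewrites it as $\sum_{i\neq j}\frac{f_{i}-f_{j}}{\kappa_{i}-\kappa_{j}}a_{ij}^{2}$, which is exactly (i); the general $A_0$ then follows as explained above.

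For (ii) I would use the symmetry of $f$ together with concavity. Fix $i\neq j$ and restrict $f$ to the segment $t\mapsto\kappa+t(e_{i}-e_{j})$, i.e. set $\psi(t):=f(\dots,\kappa_{i}+t,\dots,\kappa_{j}-t,\dots)$. Then $\psi$ is concave, being a one--variable restriction of the concave function $f$, and $\psi$ is symmetric about $t_{\ast}:=\tfrac12(\kappa_{j}-\kappa_{i})$ because interchanging the $i$-th and $j$-th arguments of $f$ changes nothing; hence $\psi'(t_{\ast})=0$ and $\psi'$ is nonincreasing. Since $\psi'(0)=f_{i}(\kappa)-f_{j}(\kappa)$, comparing the positions of $0$ and $t_{\ast}$ shows $f_{i}(\kappa)-f_{j}(\kappa)$ has sign opposite to $\kappa_{i}-\kappa_{j}$, so the quotient is $\le 0$; in the coincidence case $\kappa_{i}=\kappa_{j}$ the quotient is $f_{ii}-f_{ij}$, which is $\le 0$ since $f_{ii}=f_{jj}$ (symmetry) and $(e_{i}-e_{j})^{t}(f_{kl})(e_{i}-e_{j})\le 0$ (concavity).

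The only step that is not wholly elementary, and the one I would be most careful about, is the matrix-analytic input: that $F=f\circ\lambda$ is $C^{2}$ on $\cS_{K}$ with derivatives computed by differentiating through the eigenvalues, together with the second-order perturbation expansion of $\lambda_{i}(t)$ and its validity across eigenvalue collisions. I would simply invoke the standard theory of derivatives of spectral functions for this; the remaining manipulations --- the $i\leftrightarrow j$ symmetrization in (i) and the one--dimensional concavity argument in (ii) --- are routine.
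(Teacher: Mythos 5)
The paper gives no proof of its own here --- it simply cites Lemma~1.1 of \cite{Gerhardt} and Section~2 of \cite{Andrews} --- and your sketch is precisely the standard derivation contained in those sources: the Rayleigh--Schr\"odinger formulas $\lambda_i'(0)=a_{ii}$, $\lambda_i''(0)=2\sum_{j\neq i}a_{ij}^2/(\kappa_i-\kappa_j)$ together with the $i\leftrightarrow j$ symmetrization for (i), and the one-variable concavity-plus-symmetry argument along $t\mapsto\kappa+t(e_i-e_j)$ for (ii), including the correct limiting interpretation $f_{ii}-f_{ij}$ of the quotient when $\kappa_i=\kappa_j$ and the sign coming from $f_{ii}-2f_{ij}+f_{jj}\le 0$ with $f_{ii}=f_{jj}$. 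Your proposal is correct and takes essentially the same route; the one input you quite properly leave to the literature is the $C^2$ regularity of $F=f\circ\lambda$ across eigenvalue coincidences.
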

\begin{proof}
See lemma 1.1 in \cite{Gerhardt} or section 2 in \cite{Andrews}.
\end{proof}

\section{The Curvature Estimate}
\begin{theorem}
Suppose that $f (\kappa)$ satisfies (\ref{my assumption1}) or (\ref{my assumption2}) in addition to (\ref{f cond 1})-(\ref{f cond extra}) in $K$. If $\Sigma=\graph(u)$ is a smooth vertical graph in $\bH^{n+1}$ satisfying (\ref{req1})-(\ref{req2}) for a given $\sigma \in (0,1)$, then 
\[\max_{x \in \Sigma} \kappa_{\max}(x) \leq C(1+\max_{x \in \partial \Sigma} \kappa_{\max}(x))\] where $\kappa_{\max}:=\max\limits_{1 \leq i \leq n} \kappa_i$ is the maximal principal curvature and $C>0$ is a controllable constant depending on $\sigma$.
\end{theorem}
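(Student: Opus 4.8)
The plan is to follow the standard maximum-principle scheme for curvature estimates, adapted from section 4 of \cite{SGAR}, and to see exactly where assumptions (\ref{my assumption1}) and (\ref{my assumption2}) are used to replace the restriction $\sigma>\sigma_0$. We define a test function of the form
\[
M = \log \kappa_{\max} + \varphi(|\tilde\nabla u|^2) + A\log\left(\tfrac{1}{\nu^{n+1}}\right)
\]
(or a close variant, e.g.\ with a term in $u$ and a term $\log(\nu^{n+1})$), where $\varphi$ is a suitable increasing function and $A>0$ is a large constant to be chosen. Since $\kappa_{\max}$ need not be smooth, at the interior maximum point $x_0$ I perturb to a smooth function by working with the largest eigenvalue of $h_{ij}$ relative to a fixed frame, rotating so that $h_{ij}(x_0)$ is diagonal with $\kappa_1 = \kappa_{\max}(x_0)$. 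If the maximum of $M$ over $\overline\Sigma$ is attained on $\partial\Sigma$ there is nothing to prove; so assume it is attained at an interior point $x_0$ and derive an upper bound for $\kappa_1(x_0)$.

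At $x_0$ we have $\nabla_i M = 0$ and $F^{ij}\nabla_{ij} M \le 0$. Expanding this with Lemma~\ref{formulas}(iii) for the $\log\kappa_1$ term and Lemma~\ref{formulas}(ii),(iv) for the $\nu^{n+1}$ and gradient terms, and using Lemma~\ref{AG} to handle the second-derivative term $-F^{ij,kl}h_{ij1}h_{kl1}$ (its off-diagonal part has a favourable sign, and the diagonal part combines with the concavity inequality and the critical equation $\nabla_i M=0$ to produce the usual good third-order terms controlling $\sum F^{ii}(h_{11i})^2/\kappa_1^2$), one arrives at an inequality of the schematic form
\[
0 \;\ge\; \frac{\sigma(1+\kappa_1^2)}{\kappa_1} \;-\; \Big(\sum_i f_i + \sum_i \kappa_i^2 f_i\Big) \;+\; (\text{gradient and } \nu^{n+1}\text{ terms}) \;-\; C\sum_i F^{ii}.
\]
The term $\sum_i\kappa_i^2 f_i$ is controlled by homogeneity ($\sum\kappa_if_i=f=\sigma$) together with the good third-order terms in the standard way; the genuinely new obstacle is the term $\sum_i f_i$, which in \cite{JEMS} could only be absorbed when $\sigma>\sigma_0$. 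This is precisely where (\ref{my assumption1}) or (\ref{my assumption2}) enters: under (\ref{my assumption1}), $\sum f_i \le C$ directly; under (\ref{my assumption2}), $\sum_{\kappa_i>0} \kappa_i f_i \le C\sigma$ gives $\sum_{\kappa_i>0} f_i \le C\sigma/\kappa_i$-type bounds, and the indices with $\kappa_i\le 0$ are handled using the cone structure and $\sum\kappa_i f_i = \sigma$. Either way $\sum f_i$ is bounded by a controllable constant independent of $\kappa_1$, so for $\kappa_1$ large the dominant term $\sigma\kappa_1/C'$ forces a bound $\kappa_1(x_0)\le C$.

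The main obstacle, and the heart of the argument, is the bookkeeping at $x_0$: one must split into the two cases according to whether $\sum_i F^{ii}$ is large or small (equivalently, whether some $\kappa_i$ is very negative), choose the constant $A$ in the test function and the function $\varphi$ so that the terms coming from $\varphi(|\tilde\nabla u|^2)$ and $A\log(1/\nu^{n+1})$ — which via Lemma~\ref{formulas}(ii),(iv) contribute quantities like $A\sum_i F^{ii}(\kappa_i-\nu^{n+1})^2/\kappa_1$ — dominate the bad terms $C\sum F^{ii}$ and the leftover gradient terms, while not themselves being overwhelmed. Here the mean-convexity of $\Gamma$ is used only indirectly (it guarantees, via the barrier constructions already in \cite{JEMS}, that $u$ and $1/\nu^{n+1}$ enjoy the needed global bounds so that $\varphi$ and $A$ can be chosen as controllable constants). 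Once the test function is calibrated, the estimate $\kappa_{\max}(x_0)\le C$ follows, and since $M(x_0)\ge M(x)$ for all $x$, this yields $\max_\Sigma \kappa_{\max}\le C(1+\max_{\partial\Sigma}\kappa_{\max})$ as claimed.
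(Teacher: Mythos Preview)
Your outline has the right general shape (maximum principle, Lemma~\ref{formulas}, Lemma~\ref{AG}), but the decisive step is misidentified, and under assumption~(\ref{my assumption2}) your argument does not close. You assert that ``either way $\sum f_i$ is bounded by a controllable constant independent of $\kappa_1$.'' This is immediate under (\ref{my assumption1}), but (\ref{my assumption2}) says only that $f_i \le Cf/\kappa_i$ \emph{when $\kappa_i>0$}; it gives nothing for $\kappa_i\le 0$, and the bound degenerates as $\kappa_i\to 0^+$. Your suggestion that the nonpositive indices are ``handled using the cone structure and $\sum\kappa_if_i=\sigma$'' is not a proof: the Euler relation does not control individual $f_i$. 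In general $\sum f_i$ can be arbitrarily large on the level set $\{f=\sigma\}$ for functions satisfying (\ref{my assumption2}), so treating $\sum f_i$ as a bad term to be bounded directly cannot work. Relatedly, you describe $\sum\kappa_i^2 f_i$ as something to be ``controlled,'' when in fact it must be retained with a favourable sign and used.

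The paper's proof proceeds differently in two essential respects. First, the test function is simply $\kappa_{\max}/(\nu^{n+1}-a)$ with $0<2a\le\nu^{n+1}$; the shift by $a$ is what makes the combined contribution of Lemma~\ref{formulas}(iii) and (iv) produce the \emph{positive} term $\frac{a\kappa_1}{\nu^{n+1}-a}\bigl(\sum f_i+\sum\kappa_i^2 f_i\bigr)$, so $\sum f_i$ is never an obstacle but rather a resource. Second---and this is the heart of the argument you are missing---the genuinely dangerous term is $2\kappa_1\sum_i f_i\frac{u_i^2}{u^2}\frac{\kappa_i-\nu^{n+1}}{\nu^{n+1}-a}$, and it is handled by splitting the indices with $-\eta<\kappa_i<\nu^{n+1}$ into $J=\{i:\theta f_i<f_1\}$ and $L=\{i:\theta f_i\ge f_1\}$ for a carefully chosen $\theta\in(0,1)$. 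On $L$ one exploits the third-order term from Lemma~\ref{AG} together with the reserved good term $\sum_{i\in L}(f_i+\kappa_i^2 f_i)$; after this reduction only $-\frac{2\kappa_1}{a}(\eta+1)\sum_{i\in J}f_i$ and $-2\kappa_1\sum_{i\in L,\,\kappa_i>a}f_i$ survive. \emph{These} are the terms where (\ref{my assumption1}) or (\ref{my assumption2}) finally enters: on $J$ one has $f_i<f_1/\theta$ and $f_1\le C\sigma/\kappa_1$, while on $\{i\in L:\kappa_i>a\}$ one has $f_i\le C\sigma/a$. Your proposal has neither the $a$-shift nor the $J/L$ decomposition, and without them the case (\ref{my assumption2}) cannot be completed.
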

\begin{proof}
Let
\[M_{0}:=\sup_{ \Sigma } \frac{\kappa_{\max}}{\nu^{n+1}-a}\] where $a>0$ is a constant depending on $\sigma$ such that $\nu^{n+1} \geq 2a>0$; such a constant can always be chosen because of the gradient estimate $\nu^{n+1} \geq \sigma > 0$; see proposition 4.1 in \cite{JEMS}.

Suppose the maximum $M_0$ is attained at an interior point $x_0 \in \Sigma$. We choose a local orthonormal frame around $x_0$ such that $h_{ij}(x_0)=\kappa_i(x_0) \delta_{ij}$ and for notational convenience, we may assume \[\kappa_{\max}(x_0)=\kappa_1(x_0) \geq \kappa_2(x_0) \geq \cdots \geq \kappa_n(x_0).\] In what follows, we will suppress notation to not write out $x_0$ but keep in mind that all the calculations are done at $x_0$. Note that we may also assume $\kappa_1>\nu^{n+1}>0$ otherwise we would have $\kappa_1 \leq \nu^{n+1} \leq 1$ already.

Now since $\log \frac{h_{11}}{\nu^{n+1}-a}$ has a local maximum at $x_0$, we have 

\begin{align}
&\frac{h_{11i}}{h_{11}}-\frac{\nabla_i\nu^{n+1}}{\nu^{n+1}-a}=0 \label{1st derivative=0} \\
&\frac{h_{11ii}}{h_{11}}-\frac{\nabla_{ii}\nu^{n+1}}{\nu^{n+1}-a} \leq 0 \label{2nd derivative<=0}
\end{align}. Next, we multiply (\ref{2nd derivative<=0}) by $F^{ii}h_{11}=\kappa_1F^{ii}$ (and summing over $i$) to get 
\begin{equation}
F^{ii}h_{11ii}-\frac{\kappa_1}{\nu^{n+1}-a} F^{ii}\nabla_{ii}\nu^{n+1} \leq 0 \label{the inequality 1}
\end{equation}. By expanding each term in (\ref{the inequality 1}), we will show an inequality of the form
\[\text{$\kappa_{1}^2-C\kappa_1\leq 0$}\] for some $C>0$ depending on $\sigma$.

First, we use lemma \ref{formulas} (ii) and (\ref{1st derivative=0}) to obtain
\[h_{11i}=\frac{\kappa_1}{\nu^{n+1}-a}\nabla_i\nu^{n+1}=-\frac{\kappa_1}{\nu^{n+1}-a}\frac{u_i}{u}(\kappa_i-\nu^{n+1})\]

Then, by lemma \ref{formulas} (ii)-(iv) and lemma \ref{AG} (i), we have
\begin{align*}
F^{ii}h_{11ii}\geq &\ 2\sum_{i \geq 2} \frac{f_i-f_1}{\kappa_1-\kappa_i} h_{i11}^2+\sigma(1+\kappa_{1}^2)-\kappa_1\left(\sum_{i=1}^{n} f_i + \sum_{i=1}^{n} \kappa_{i}^2 f_i\right) \\
\geq & \ 2\kappa_{1}^2\sum_{i \geq 2} \frac{f_i-f_1}{\kappa_1-\kappa_i} \frac{u_{i}^2}{u^2} \left(\frac{\kappa_i-\nu^{n+1}}{\nu^{n+1}-a}\right)^2 +\sigma\kappa_{1}^2\\
&-\kappa_1\left(\sum_{i=1}^{n} f_i + \sum_{i=1}^{n} \kappa_{i}^2 f_i\right)\\
\end{align*} and 
\begin{align*}
-\frac{\kappa_1}{\nu^{n+1}-a} F^{ii}\nabla_{ii}\nu^{n+1}=&-\frac{2\kappa_1}{\nu^{n+1}-a} f_{i}\frac{u_i}{u}\nabla_i\nu^{n+1}-\sigma \frac{1+(\nu^{n+1})^2}{\nu^{n+1}-a}\kappa_1 \\
&+\frac{\nu^{n+1}\kappa_1}{\nu^{n+1}-a}\left(\sum_{i=1}^{n} f_i+\sum_{i=1}^{n} \kappa_{i}^2 f_i\right) \\
\geq &\ 2\kappa_1 \sum_{i=1}^{n} f_{i}\frac{u_{i}^2}{u^2}\frac{\kappa_i-\nu^{n+1}}{\nu^{n+1}-a}-\frac{2\sigma}{a}\kappa_1 \\
&+\frac{\nu^{n+1}\kappa_1}{\nu^{n+1}-a}\left(\sum_{i=1}^{n} f_i+\sum_{i=1}^{n} \kappa_{i}^2 f_i\right) \\
\end{align*}
Adding them up, the oringial inequality (\ref{the inequality 1}) becomes

\begin{align}
0 \geq &\ \sigma \left(\kappa_{1}^2-\frac{2}{a}\kappa_1\right) +\frac{a \kappa_1}{\nu^{n+1}-a} \left(\sum_{i=1}^{n} f_i + \sum_{i=1}^{n} \kappa_{i}^2f_i\right) \label{first line}\\
&+2\kappa_{1}^2\sum_{i \geq 2} \frac{f_i-f_1}{\kappa_1-\kappa_i}\frac{u_{i}^2}{u^2} \left(\frac{\kappa_i-\nu^{n+1}}{\nu^{n+1}-a}\right)^2+2\kappa_1\sum_{i=1}^{n} f_i \frac{u_{i}^2}{u^2} \frac{\kappa_i-\nu^{n+1}}{\nu^{n+1}-a} \label{second line}
\end{align} By (\ref{f cond 1}), the first line (\ref{first line}) can be made positive if we assume $\kappa_1 \geq \frac{2}{a}$. The first term in the second line (\ref{second line}) is positive due to lemma \ref{AG} (ii); only the second term can be potentially negative if $\kappa_i<\nu^{n+1}$.

Note that by lemma \ref{formulas} (i), we have
\begin{align*}
&\frac{a\kappa_1}{\nu^{n+1}-a} \sum_{i=1}^{n} \kappa_{i}^2f_i+2\kappa_1\sum_{i=1}^{n} f_i \frac{u_{i}^2}{u^2} \frac{\kappa_i-\nu^{n+1}}{\nu^{n+1}-a} \\
\geq &\kappa_1\sum_{\kappa_i < \nu^{n+1}} \frac{f_i}{\nu^{n+1}-a} a\kappa_{i}^2+\kappa_1\sum_{\kappa_i<\nu^{n+1}} \frac{f_i}{\nu^{n+1}-a} 2(\kappa_i-\nu^{n+1}) \\
\geq &\frac{\kappa_1}{\nu^{n+1}-a} \sum_{\kappa_i<\nu^{n+1}} f_i(a\kappa_{i}^2+2\kappa_i-2)
\end{align*}, whose summand is positive if $\kappa_i\leq -\eta$ where $\eta:=\frac{1+\sqrt{1+2a}}{a}$. This motivates us to consider 
\[J=\{i: -\eta<\kappa_i<\nu^{n+1},\ \theta f_i<f_1\}, \quad L=\{i: -\eta<\kappa_i<\nu^{n+1},\ \theta f_i \geq f_1\}\] where $\theta \in (0,1)$ is to be determined later.

Now the second term in (\ref{second line}) can be split into three sums:
\begin{equation}
2\kappa_1\sum_{i=1}^{n} f_i \frac{u_{i}^2}{u^2} \frac{\kappa_i-\nu^{n+1}}{\nu^{n+1}-a}=2\kappa_1\left(\sum_{\kappa_i \leq -\eta}+\sum_{i \in J}+\sum_{i \in L}\right)f_i \frac{u_{i}^2}{u^2} \frac{\kappa_i-\nu^{n+1}}{\nu^{n+1}-a} \label{second term in second line}
\end{equation}

Also, the second term in (\ref{first line}) can be split as 
\begin{equation}
\begin{split}
&\frac{a \kappa_1}{\nu^{n+1}-a} \left(\sum_{i=1}^{n} f_i + \sum_{i=1}^{n} \kappa_{i}^2f_i\right) \\
\geq \ &\frac{a \kappa_1}{\nu^{n+1}-a} \left(\sum_{\kappa_i \leq -\eta} \kappa_{i}^2f_i+\sum_{i \in L} f_i+\sum_{i \in L} \kappa_{i}^2f_i\right)
\end{split} \label{sum of f_i}
\end{equation}

As we have argued above, the $\kappa_i \leq -\eta$ part of (\ref{second term in second line}) adds to the $\kappa_i \leq -\eta$ part of (\ref{sum of f_i}) which results in a positive quantity. While for the J-sum in (\ref{second term in second line}), we use lemma \ref{formulas} (i) to obtain
\begin{equation}
2\kappa_1\sum_{i \in J} f_i \frac{u_{i}^2}{u^2}\frac{\kappa_i-\nu^{n+1}}{\nu^{n+1}-a}\geq \frac{2\kappa_1}{\nu^{n+1}-a}\sum_{i \in J} f_i (\kappa_i-\nu^{n+1}) \geq -\frac{2\kappa_1}{a}(\eta+1)\sum_{i \in J} f_i \label{J sum}
\end{equation}
Before we proceed to estimate the L-sum in (\ref{second term in second line}), we note that
\[\frac{\kappa_1}{\kappa_1-\kappa_i}\geq \frac{1}{1+\lambda} \quad \text{for $i \in L$}\] where $\lambda:=\frac{\eta}{\kappa_1}>0$.
Then, we can use the first term in (\ref{second line}) so that
\begin{equation}
\begin{split}
&2\kappa_{1}^2\sum_{i \geq 2} \frac{f_i-f_1}{\kappa_1-\kappa_i}\frac{u_{i}^2}{u^2} \left(\frac{\kappa_i-\nu^{n+1}}{\nu^{n+1}-a}\right)^2+2\kappa_1\sum_{i \in L} f_i \frac{u_{i}^2}{u^2} \frac{\kappa_i-\nu^{n+1}}{\nu^{n+1}-a} \\
\geq \ & 2\kappa_{1}^2\sum_{i \in L} \frac{f_i-f_1}{\kappa_1-\kappa_i}\frac{u_{i}^2}{u^2} \left(\frac{\kappa_i-\nu^{n+1}}{\nu^{n+1}-a}\right)^2+2\kappa_1\sum_{i \in L} f_i \frac{u_{i}^2}{u^2} \frac{\kappa_i-\nu^{n+1}}{\nu^{n+1}-a} \\
\geq \ & 2\kappa_{1}\frac{1-\theta}{1+\lambda}\sum_{i \in L} f_i\frac{u_{i}^2}{u^2} \left(\frac{\kappa_i-\nu^{n+1}}{\nu^{n+1}-a}\right)^2+2\kappa_1\sum_{i \in L} f_i \frac{u_{i}^2}{u^2} \frac{\kappa_i-\nu^{n+1}}{\nu^{n+1}-a} \\
=\ & 2\kappa_1\sum_{i \in L} f_i \frac{u_{i}^2}{u^2}\frac{\kappa_i-\nu^{n+1}}{\nu^{n+1}-a}\left[(1-\mu)\frac{\kappa_i-\nu^{n+1}}{\nu^{n+1}-a}+1\right] \\
=\ &2\kappa_1\sum_{i \in L} f_i \frac{u_{i}^2}{u^2}\frac{(\kappa_i-\nu^{n+1})(\kappa_i-a)}{(\nu^{n+1}-a)^2}-2\mu \kappa_1\sum_{i \in L} f_i \frac{u_{i}^2}{u^2}\left(\frac{\kappa_i-\nu^{n+1}}{\nu^{n+1}-a}\right)^2 \\
\geq \ & -2\kappa_1\sum_{i \in L, \kappa_i>a} f_i-\frac{2\mu}{a}\frac{\kappa_1}{\nu^{n+1}-a} \sum_{i \in L} f_i(\kappa_i-\nu^{n+1})^2
\end{split} \label{L sum}
\end{equation} where we have used lemma \ref{formulas} (i) and $0<\mu:=\frac{\theta+\lambda}{1+\lambda}<1$.

Now, let us first deal with the second term in the last line of (\ref{L sum}); we add it to the L-part of (\ref{sum of f_i}):
\begin{align*}
&\frac{a\kappa_1}{\nu^{n+1}-a} \left(\sum_{i \in L} f_i+\sum_{i \in L} \kappa_{i}^2 f_i\right)-\frac{2\mu}{a}\frac{\kappa_1}{\nu^{n+1}-a} \sum_{i \in L} f_i(\kappa_i-\nu^{n+1})^2 \\
=&\frac{\kappa_1}{\nu^{n+1}-a}\sum_{i \in L}f_i \left[\left(a-\frac{2\mu}{a}\right)\kappa_{i}^2+\frac{4\mu}{a}\kappa_i\nu^{n+1}+a-\frac{2\mu}{a}(\nu^{n+1})^2\right]\\
\geq & \frac{\kappa_1}{\nu^{n+1}-a}\sum_{i \in L}f_i \left(\frac{a}{2}\kappa_{i}^2+a^2\kappa_i+\frac{a}{2}\right)\\
>&\ 0
\end{align*} where we have used $2a \leq \nu^{n+1} \leq 1$ and chosen $\frac{a^2}{8} \leq \mu \leq \frac{a^2}{4}$. The choice of $\mu$ is possible because it is equivalent to 
\[\frac{a^2}{8}+\lambda\left(\frac{a^2}{8}-1\right) \leq \theta \leq \frac{a^2}{4}+\lambda\left(\frac{a^2}{4}-1\right)\] and we just need to make sure the left hand side is strictly between 0 and 1:
\[0<\frac{a^2}{8}+\lambda\left(\frac{a^2}{8}-1\right)<1 \Longleftrightarrow  -1<\lambda < \frac{a^2}{8-a^2}\] which is indeed achievable if we assume 
\[\kappa_1>\eta \cdot \frac{8-a^2}{a^2}.\] Thus we can choose $\theta \in (0,1)$ such that 
\[0<\frac{a^2}{8}+\lambda\left(\frac{a^2}{8}-1\right)\leq \theta < \min \left\{1,\ \frac{a^2}{4}+\lambda\left(\frac{a^2}{4}-1\right)\right\}.\]

Finally, there are only two trouble terms remaining, namely, the J-sum (\ref{J sum}) and the first term in the last line of (\ref{L sum}):
\begin{equation}
-\frac{2\kappa_1}{a}(\eta+1)\sum_{i \in J} f_i-2\kappa_1\sum_{i \in L,\kappa_i>a} f_i\label{L sum 2}
\end{equation}. This is the only place where we apply one of the assumptions (\ref{my assumption1}) or (\ref{my assumption2}); and we recall them here:
\begin{align*}
\text{Assumption (\ref{my assumption1}):} \quad& \exists\ C>0\  \forall\ \kappa \in K\ \text{s.t.}\ \sum_{i=1}^{n} f_i(\kappa) \leq C\  \\
\text{Assumption (\ref{my assumption2}):} \quad &\exists\ C>0\ \forall\ \kappa \in K\ \text{s.t.}\ \kappa_i >0 \Rightarrow f_i \leq \frac{Cf}{\kappa_i}
\end{align*}

If $f$ satisfies (\ref{my assumption1}) in $K$, then the two trouble terms can be easily handled:
\[-\frac{2\kappa_1}{a}(\eta+1)\sum_{i \in J} f_i-2\kappa_1\sum_{i \in L,\kappa_i>a} f_i \geq -C\kappa_1.\]

If $f$ satisfies (\ref{my assumption2}) in $K$, then by (\ref{req1}) and $\kappa_1 \geq \frac{2}{a}$, each term can be estimated as follows:
\begin{align*}
-\frac{2\kappa_1}{a}(\eta+1)\sum_{i \in J} f_i&\geq -\frac{2n\kappa_1}{\theta a}(\eta+1)f_1\geq -\frac{\sigma n\kappa_1}{\theta }(\eta+1) \geq -C\kappa_1, \\
\text{and} \quad -2\kappa_1\sum_{i \in L,\kappa_i>a} f_i &\geq -2\kappa_1C\sum_{i \in L,\kappa_i>a} \frac{\sigma}{a} \geq -\frac{2\sigma n\kappa_1}{a}C\geq -C\kappa_1.
\end{align*} In either case, the inequality (\ref{first line})-(\ref{second line}) implies that 
\[0 \geq \kappa_{1}^2-C\kappa_1\] and the proof is complete.
\end{proof}

\section{Proof of Corollary \ref{my corollary}}
For convenience, let us recall the corollary here: If $\Gamma$ is mean-convex, then for the following $(f,K)$ pairs:
\begin{enumerate}
\item[(i)] $f=\frac{H_{k}}{H_{k-1}}$ and $K=K_k$, $1 \leq k \leq n$.
\item[(ii)] $f=(H_k/H_l)^{\frac{1}{k-l}}$ and $K=K_{k+1}$, $1 \leq l<k \leq n$.
\item[(iii)] $f=H_{k}^{1/k}$ and $K=K_{k+1}$, $1 \leq k \leq n$.
\end{enumerate}, the asymptotic Plateau problem (\ref{req1})-(\ref{req2}) is solvable for all $\sigma \in (0,1)$.

\begin{proof}
According to lemma \ref{curvature function condition}, it suffices to verify that the curvature functions satisfy (\ref{my assumption1}) or (\ref{my assumption2}) in the indicated cones. Before we proceed, let us first recall the following properties of the elementary symmetric polynomials:
\begin{align}
H_k&=\kappa_i \frac{\partial H_k}{\partial \kappa_i}+\frac{n-k}{k+1}\frac{\partial H_{k+1}}{\partial \kappa_i}, \label{H_k}\\
\text{and} \quad \sum_{i=1}^{n} &\frac{\partial H_k}{\partial \kappa_i}=kH_{k-1}. \label{derivative H_k}
\end{align} 

For (i), we verify that $f=\frac{H_k}{H_{k-1}}$ satisfies (\ref{my assumption1}) in $K_k$. Indeed, we have that 
\begin{align*}
\sum_{i=1}^{n} f_i &=\sum_{i=1}^{n} \frac{\partial}{\partial \kappa_i} \frac{H_k}{H_{k-1}}= \sum_{i=1}^{n} \frac{\frac{\partial H_k}{\partial \kappa_i}H_{k-1}-H_k\frac{\partial H_{k-1}}{\partial \kappa_i}}{H_{k-1}^2} \\
& \leq \sum_{i=1}^{n}\frac{\frac{\partial H_k}{\partial \kappa_i}H_{k-1}}{H_{k-1}^2} \quad \text{since $H_k>0$, $\frac{\partial H_{k-1}}{\partial \kappa_i}>0$ in $K_k$} \\
&=\frac{kH_{k-1}H_{k-1}}{H_{k-1}^2} \quad \text{by (\ref{derivative H_k})}\\
&=k
\end{align*} for all $\kappa \in K_k$.

For (ii) and (iii), suppose that $\kappa_i>0$ and by invoking (\ref{H_k}), we have that  
\begin{equation}
\frac{\partial H_{k}}{\partial \kappa_i}=\frac{1}{\kappa_i}\left(H_k-\underbrace{C_{n,k}\frac{\partial H_{k+1}}{\partial \kappa_i}}_{>0}\right) \leq \frac{H_k}{\kappa_i} \quad \text{in $\{\kappa_i>0\} \cap K_{k+1}$.} \label{partial H_k}
\end{equation}

If $f=(H_k/H_l)^{\frac{1}{k-l}}$, then for $\kappa \in \{\kappa_i>0\} \cap K_{k+1}$
\begin{align*}
f_i&=\frac{1}{k-l}(H_k/H_l)^{\frac{1}{k-l}-1} \frac{\frac{\partial H_k}{\partial \kappa_i} H_l-H_k\frac{\partial H_l}{\partial \kappa_i}}{H_{l}^2} \\
&\leq \frac{1}{k-l}(H_k/H_l)^{\frac{1}{k-l}-1} \frac{1}{H_l}\frac{\partial H_k}{\partial \kappa_i} \quad \text{since $H_k\frac{\partial H_l}{\partial \kappa_i}>0$ in $K_k$}\\
&\leq \frac{1}{k-l}(H_k/H_l)^{\frac{1}{k-l}-1} \frac{H_k}{H_l}\frac{1}{\kappa_i} \quad \text{by (\ref{partial H_k})}\\
&=\frac{f}{k-l}\frac{1}{\kappa_i}.
\end{align*}.

Similarly, if $f=H_{k}^{\frac{1}{k}}$, then for $\kappa \in \{\kappa_i>0\} \cap K_{k+1}$
\begin{align*}
f_i=\frac{1}{k}H_{k}^{\frac{1}{k}-1} \frac{\partial H_k}{\partial \kappa_i} \leq \frac{1}{k}H_{k}^{\frac{1}{k}-1} \frac{H_k}{\kappa_i}=\frac{f}{k} \frac{1}{\kappa_i}.
\end{align*} This shows that $H_{k}^{\frac{1}{k}}$ and $(H_k/H_l)^{\frac{1}{k-l}}$ both satisfy (\ref{my assumption2}) in $K_{k+1}$.
\end{proof}

\medskip

\noindent

\section*{Acknowledgements}
  
This note is an extension of the author's M.Sc. thesis at McMaster University and the author would like to thank Professor Siyuan Lu for his supervision on the thesis.
  
\newpage

\bibliographystyle{amsplain}
\bibliography{refs}

\end{document}